\theoremstyle{plain}
\newtheorem{thm}{Theorem}
\newtheorem*{thm*}{Theorem}
\newtheorem{prop}[thm]{Proposition} 
\newtheorem{lem}[thm]{Lemma} 
\newtheorem*{cor*}{Corollary}
\newtheorem{defi}[thm]{Definition}
\newtheorem{rem}[thm]{Remark}
\newcommand {\R} {\mathbb{R}} 
\newcommand {\T} {\mathbb{T}} \newcommand {\N} {\mathbb{N}}
\newcommand {\p} {\partial}
\newcommand {\dt} {\partial_t}
\begin{document}
\title[Stability in Gevrey Spaces]{Linear Inviscid Damping in Sobolev and Gevrey
Spaces} \author{Christian Zillinger}

\begin{abstract}
  In a recent article \cite{jia2019linear} Jia established linear inviscid
  damping in Gevrey regularity for compactly supported Gevrey regular shear
  flows in a finite channel, which is of great interest in view of existing
  nonlinear results \cite{deng2018}, \cite{bedrossian2013asymptotic},
  \cite{ionescu2018inviscid}. In this article we provide an alternative very
  short proof of stability in Gevrey regularity as a consequence of stability in
  high Sobolev regularity \cite{Zill3}, \cite{Zill5}. Here, we consider both the setting
  of a finite channel with compactly supported perturbations and of an infinite
  channel without this restriction. Furthermore, we consider the setting where
  perturbations vanish only of finite order.
\end{abstract}

\maketitle

\noindent
In recent years the asymptotic stability of the Euler equations
\begin{align*}
  \dt v + v \cdot \nabla v + \nabla p =0,
\end{align*}
near shear flow solutions $v=(U(y),0)$ has been an area of very active research.
Following the works of Mouhot and Villani \cite{Villani_short} on Landau damping
in plasma physics, in a seminal work Bedrossian and Masmoudi
\cite{bedrossian2013asymptotic} for the first time established nonlinear
asymptotic stability and damping for the prototypical case $U(y)=y$, known as
Couette flow. Here Gevrey regularity plays a crucial role in controlling
nonlinear resonances, so called \emph{echoes} \cite{deng2018}, \cite{dengZ2019},
\cite{bedrossian2013landau}. In contrast, the linear problem is known to stable
in (arbitrary) Sobolev regularity \cite{Zill3}, \cite{grenier2019linear}, \cite{Zhang2015inviscid} for the setting without boundary,
but only stable in (optimal) low Sobolev regularity for the setting with
boundary \cite{Zill5}, \cite{Zhang2015inviscid} unless shear perturbation
vanishes on the boundary. In a recent work \cite{jia2019linear} Jia thus studied
the problem of linear asymptotic stability of compactly supported perturbations
to Couette flow in Gevrey regularity.

As the main results of this article we show that:
\begin{itemize}
\item Stability in Gevrey regularity corresponds to a quantitative control of
  the stability in Sobolev spaces. In particular, we show that the control
  established in \cite{Zill3} yields a very short proof of stability in Gevrey
  classes for the setting of an infinite channel. Furthermore, only a
  quantitative stability result in $L^2$ is needed, which then implies all
  higher stability results.
\item In \cite{jia2019linear} Jia considers the question of stability in Gevrey
  regularity for the setting of a finite channel, where the shear flow and the
  vorticity perturbation are compactly supported away from the boundary. In view
  of the boundary instabilities established in \cite{Zill5} such a restriction
  might be necessary. In this work we show that under such a support condition
  stability in Gevrey regularity for the setting of a finite channel essentially
  reduces to the setting without boundary with minor correction terms similar to the
  $H^1$ stability problem considered in \cite{Zill5}.
\item As a further result, we establish stability in Sobolev spaces $H^j$,
  $j\leq N$ in the setting of a finite channel if $U''$ and $\omega_0$ vanish up to order $N$
  on the boundary.
\item In this article we restrict ourselves to considering small, smooth
  bilipschitz shear flows and circular flows close to (Taylor-)Couette flow. We
  expect a further extension to more general and degenerate shear flows and
  circular flows in weighted spaces in an analogous way to
  \cite{coti2019degenerate} to be possible with some technical effort.
\end{itemize}
In this sense the core problem of linear inviscid damping lies in establishing
$L^2$ stability and $H^1$ stability (which has to account for some boundary
effects). The setting of higher regularity then follows by an iteration scheme.

We remark that our theorems impose a smallness condition which is sufficient but
not necessary. As shown in \cite{Zhang2015inviscid} a more precise condition is
given by requiring that there are no embedding eigenvalues of the associated
Rayleigh problem. Our stronger condition allows us to construct a
Lyapunov functional using perturbative methods.
\\

The linearized Euler equations around a shear flow $U(y)$ are given by
\begin{align*}
  \dt \omega + U(y)\p_x + U''(y)\p_x \Delta^{-1} \omega=0.
\end{align*}
Here, $\omega_0$ and hence $\omega$ is understood to without loss of generality
have zero mean in $x$ and in the setting of a finite channel $\T_L \times
[0,1]$, $\Delta^{-1}$ imposes zero Dirichlet boundary conditions in $y$.
Assuming that $U(y)$ is Bilipschitz, we change variables by $y=U^{-1}(z)$ and
denote
\begin{align*}
  f(z)=U''(U^{-1}(z)), \\
  g(z)= U'(U^{-1}(z)),
\end{align*}
and further pass to Lagrangian coordinates $(t,x+tz,z)$. With respect to these
coordinates our problem is given by
\begin{align}
  \label{eq:linEuler1}
  \begin{split}
    \dt \omega + f \p_x L_t \omega &=0, \\
    L_t&= (\p_x^2 + (g(\p_z-t\p_x))^2)^{-1},
  \end{split}
\end{align}
where again in the setting of a finite channel $L_t$ satisfies zero Dirichlet
boundary conditions.

We recall that Gevrey classes measure the growth of $\mathcal{C}^{j}$ or $H^{j}$
norms as $j\rightarrow \infty$. See \cite{ionescu2018inviscid},
\cite{GevreyEncy} and \cite[page 281]{hormander2015analysis}.
\begin{defi}
  \label{defi:Gevrey}
  Let $f \in C^{\infty}$ and $s \in [1,\infty)$. We then introduce the following
  three related but distinct definitions of the Gevrey class $\mathcal{G}_s$:
  \begin{enumerate}
  \item We say $f$ is in the $L^\infty$ based Gevrey class
    $\mathcal{G}_{s}^\infty$ if there exists a constant $C>0$ such that
    \begin{align*}
      \|f\|_{\mathcal{C}^{j}} \leq C^{1+j} (1+j)^{s j}
    \end{align*}
    for all $j \in \N$.
  \item We say that $f$ is in the (Sobolev based) Gevrey class $\mathcal{G}_{s}$
    if there exists a constant $C>0$ such that
    \begin{align*}
      \|f\|_{H^{j}} \leq C^{1+j} (1+j)^{s j}
    \end{align*}
    for any $j \in \N$
  \item We say that $f$ is in the second Sobolev bases Gevrey class if there
    exists a constant $\lambda>0$ such that
    \begin{align*}
      \int \exp(\lambda \langle \xi \rangle^{\frac{1}{s}}) |\tilde{f}(\xi)|^2 d\xi < \infty. 
    \end{align*}
  \end{enumerate}
\end{defi}

\begin{rem}
  \begin{itemize}
  \item We remark that in the literature also a parametrization in terms of
    $\frac{1}{s} \in (0,1]$ is common.
  \item A more general version of the first definition considers the restriction
    of $f$ to compact sets $K$ and constants $C_K$. For example, $f(x)=x$ does
    not satisfy our definition, since we impose that $f(x)$ is bounded
    uniformly. However, since our theorems impose these constraints on
    derivatives of $U(y)$, $U(y)$ itself may be close to affine.
  \item We may use a Sobolev embedding to estimate $\|f\|_{C^j}\leq
    \|f\|_{H^{j+N}}$, where $N>0$ depends on the dimension. Increasing $s$
    slightly and increasing $C$, we thus see that every $f$ in the first Sobolev
    based Gevrey class is also contained in the $L^\infty$ based Gevrey class.
  \item Expressing $\exp(\lambda \langle \xi \rangle^{\frac{1}{s}})$ as a
    series, the last definition implies that
    \begin{align*}
      \lambda^{j} \frac{1}{j!} \|f\|_{H^{\frac{j}{2s}}}^2 \leq C
    \end{align*}
    for all $j \in \N$. Expressing the factorial using the Stirling
    approximation and considering $\sigma=\frac{j}{2s}$, we thus see that any
    such function also satisfies the second definition.
  \item Conversely, we may round up $\frac{j}{2s}$ in the series expansion to
    show that the second definition also implies the third with an arbitrarily
    small loss in $s$.
  \item In our analysis we consider the regularity of coefficient functions
    according to the first definition and the regularity of the vorticity
    according to the second definition.
  \end{itemize}
\end{rem}

The following three theorems summarize our main results. 
We first consider the case of an infinite channel $\T_{L} \times \R$,
for which we had previously established non-quantitative stability results in
\cite{Zill3}.
The following theorem improves this to quantitative estimates for each $H^j$ and
thus to Gevrey regularity.
Here, as we will see in Section \ref{sec:infinite}, the core of the proof is
given by establishing quantitative stability in $L^2$, from which higher
regularity follows by a short inductive argument.

\begin{thm}[Summary infinite channel]
  \label{thm:infinite}
  Consider the linearized Euler equations \eqref{eq:linEuler1} on $\T_{L} \times
  \R$ around a bilipschitz shear flow $(U(y),0)$. There exists $c>0$ such that
  if
  \begin{align}
    \label{eq:smallness}
    \|f\|_{W^{1,\infty}} L<c,
  \end{align}
  then for any $s \in [1,\infty)$ if $f,g \in \mathcal{G}_{s}^{\infty}$ and
  $\omega_0\in \mathcal{G}_s$, the problem \eqref{eq:linEuler1} is stable in Gevrey
  regularity.
  That is, if for all $j \in \N$ it holds that
  \begin{align*}
    \|f\|_{\mathcal{C}^j} + \|g\|_{\mathcal{C}^j} &\leq D_1^{1+j} (1+j)^{j s}, \\
    \|\omega_0\|_{H^{j}}&\leq D_2^{1+j} (1+j)^{j s},
  \end{align*}
  then there exists $C=C(D_1,D_2,c)$ such that for all times $t\geq 0$
  \begin{align}
    \label{eq:16}
    \|\omega(t)\|_{H^{j}}\leq C^{1+j} (1+j)^{j s}.
  \end{align}
\end{thm}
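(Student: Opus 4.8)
The plan is to treat the quantitative $L^2$ stability as a black box and to propagate it to all Sobolev orders by induction, exploiting the algebra structure of Gevrey sequences. Concretely, I would first record the core estimate: under the smallness hypothesis \eqref{eq:smallness} the solution operator $S(t,\tau)$ associated with the homogeneous equation $\dt \omega + f\p_x L_t \omega=0$ is bounded on $L^2(\T_L\times\R)$ uniformly in $t\geq\tau\geq 0$, with an operator norm $C_0$ depending only on the smallness constant $c$. This is exactly the content of the quantitative $L^2$ stability statement (obtained via the perturbative Lyapunov functional), and the crucial point is that $C_0$ is controlled by the low norm $\|f\|_{W^{1,\infty}}L$ alone and is insensitive to the high-derivative Gevrey norms of $f$ and $g$.

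Second, I would set up the induction on the order $j$. Assuming the desired bound $\|\omega(t)\|_{H^{k}}\le C^{1+k}(1+k)^{ks}$ holds uniformly in $t$ for all $k<j$, I would apply $\p_z^{a}\p_x^{b}$ with $a+b=j$ to \eqref{eq:linEuler1}. Writing $D^j=\p_z^{a}\p_x^{b}$, the equation for $D^j\omega$ takes the form $\dt (D^j\omega)+f\p_x L_t (D^j\omega)=F_j$, where the forcing $F_j=-[D^j,\,f\p_x L_t]\omega$ collects all commutator terms. The key structural observation is that the principal part is unchanged: $D^j\omega$ solves the same transport–nonlocal equation with the same coefficient $f$ and the same operator $L_t$, so the propagator bound above applies verbatim to the homogeneous evolution of $D^j\omega$.

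Third, I would estimate the forcing. Expanding $[D^j, f\p_x L_t]$ by the Leibniz rule produces a sum of terms in which at least one derivative falls on the coefficient $f$ or, through the commutator $[\p_z,L_t]=-L_t[\p_z,L_t^{-1}]L_t$, on $g$ and $g'$; each such term pairs a derivative of $f$ or $g$ of some order $m$ with at most $j-m$ derivatives of $\omega$, and the nonlocal factor $\p_x L_t$ supplies the time decay needed to make $\|F_j(\tau)\|_{L^2}$ integrable in $\tau$. Inserting the Gevrey bounds $\|f\|_{\mathcal{C}^m}+\|g\|_{\mathcal{C}^m}\le D_1^{1+m}(1+m)^{ms}$ and the inductive hypothesis on $\omega$, the combinatorial sum $\sum_m \binom{j}{m}(1+m)^{ms}(1+j-m)^{(j-m)s}$ closes because the sequence $(1+j)^{js}$ is, up to a geometric factor, super-multiplicative — this is precisely the Gevrey algebra property. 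Finally, Duhamel's formula $D^j\omega(t)=S(t,0)D^j\omega_0+\int_0^t S(t,\tau)F_j(\tau)\,d\tau$ together with the uniform bound $\|S\|_{L^2\to L^2}\le C_0$ yields $\|D^j\omega(t)\|_{L^2}\le C_0\|D^j\omega_0\|_{L^2}+C_0\int_0^t\|F_j(\tau)\|_{L^2}\,d\tau$, and the Gevrey bound at order $j$ follows provided $C$ is chosen large enough relative to $C_0$, $D_1$, $D_2$.

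The main obstacle I anticipate is twofold and concentrated in the third step: first, controlling the commutators with the nonlocal, time-dependent operator $L_t$ so that each forcing term retains the decay of $\p_x L_t$ and is integrable in time uniformly in $j$; and second, bookkeeping the constants so that summing the Leibniz expansion reproduces the factor $(1+j)^{js}$ with a purely geometric prefactor $C^{1+j}$ and with a single constant $C$ that does not degrade as $j\to\infty$. Verifying the Gevrey algebra inequality with explicit constants, and checking that the time integral of the forcing contributes only such a geometric factor, is where the real work lies; the uniform-in-time $L^2$ propagator bound does all the analytic heavy lifting and keeps the induction short.
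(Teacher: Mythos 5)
Your overall architecture (a quantitative $L^2$ input, induction on the derivative order, Leibniz expansion of the commutator $[D^j, f\p_x L_t]$, and the super-multiplicativity $(1+j_1)^{j_1 s}(1+j_2)^{j_2 s}\leq (1+j)^{js}$ to close the combinatorics) matches the paper's strategy, and your final bookkeeping step is essentially how the paper deduces Theorem \ref{thm:infinite} from Theorem \ref{thm:quantitative_stability}. However, there is a genuine gap in your third step: the claim that ``the nonlocal factor $\p_x L_t$ supplies the time decay needed to make $\|F_j(\tau)\|_{L^2}$ integrable in $\tau$'' is false. As the paper emphasizes in its discussion of the Orr mechanism, the symbol $\frac{k}{k^2+(\eta-kt)^2}$ is integrable in time only at \emph{fixed} frequency $(k,\eta)$; the operator norm
\begin{align*}
\sup_{k,\eta}\frac{k}{k^2+(\eta-kt)^2}=\sup_{k,\eta}\frac{k}{k^2+\eta^2}
\end{align*}
does not improve in time. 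Since $F_j(\tau)$ acts on $\omega(\tau)$, whose frequency content fills out all of $(k,\eta)$, your estimate on $\|F_j(\tau)\|_{L^2}$ is at best uniform in $\tau$ but not integrable; Duhamel then yields only linear-in-$t$ growth of $\|D^j\omega(t)\|_{L^2}$, which does not prove \eqref{eq:16}. Relatedly, your black box --- mere uniform boundedness of the solution operator $S(t,\tau)$ on $L^2$ --- is strictly weaker than what the induction needs.

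The paper circumvents exactly this difficulty by never invoking Duhamel. It instead constructs monotone Lyapunov functionals: $E_0(t)=\langle \omega, A\omega\rangle$ with the Fourier multiplier $\exp(c\arctan(C(\eta-kt)))$, whose time derivative produces a \emph{negative} term $-C\|\omega(t)\|_{H^{-1}_t}^2$ in the time-dependent norm with weight $(k^2+C^2(\eta-kt)^2)^{-1}$; this converts the frequency-pointwise integrability into a usable space-time damping term, and the resulting statement $\frac{d}{dt}E_0(t)+C\|\omega(t)\|_{H^{-1}_t}^2\leq 0$ is the correct strengthening of your $L^2$ black box. The commutator terms are then estimated by duality in the $H^{\pm 1}_t$ pair using the ellipticity of $L_t$, giving bounds of the form $\|\p_y^{j_2}L_t\omega\|_{H^1_t}^2\leq \|\p_y^{j_2}\omega\|_{H^{-1}_t}^2+\sum_{j_3+j_4=j_2}\|g\|_{j_3}^2\|\p_y^{j_4}\omega\|_{H^{-1}_t}^2$, and absorbed by Young's inequality into the damping terms of lower-order energies, which are built directly into the definition $E_{j+1}=2\langle \p_y^{j+1}\omega, A\p_y^{j+1}\omega\rangle+4C\sum_{j_1+j_2=j}\|(f,g)\|_{j_1}^2 E_{j_2}$. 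If you replace your propagator-plus-integrable-forcing step by this absorption scheme --- i.e., upgrade the $L^2$ input from ``bounded propagator'' to ``non-increasing energy with quantified $H^{-1}_t$ damping'' --- your induction and your Gevrey combinatorics go through as planned.
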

The smallness condition \eqref{eq:smallness} here is imposed in order to allow a
perturbative construction in our stability proof for $L^2$ and sufficient but
not necessary (see Section \ref{sec:infinite} for further discussion).

While the setting of a finite channel $\T_L \times [0,1]$ is in general unstable
in higher Sobolev regularity \cite{Zill5}, it turns out that in the setting of
compactly supported perturbations studied in \cite{jia2019linear} all boundary
effects can be easily controlled and an analogous stability result hold.

\begin{thm}[Summary finite channel]
  \label{thm:finite}
  Consider the linearized Euler equations \eqref{eq:linEuler1} on $\T_{L} \times
  [0,1]$ around a bilipschitz shear flow flow $(U(y),0)$, with $U'\geq 1$. There
  exists $c>0$ such that if
  \begin{align}
    \|f\|_{W^{1,\infty}} L<c,
  \end{align}
  then for any $s \in [1,\infty)$ if $f,g \in \mathcal{G}_{s}^{\infty}$ and
  $\omega_0\in \mathcal{G}_s$ are compactly supported away from the boundary,
  then problem \eqref{eq:linEuler1} on $\T_{L} \times [0,1]$ is stable in Gevrey
  regularity in the sense of Theorem \ref{thm:infinite}.
\end{thm}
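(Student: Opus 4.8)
The plan is to reduce the finite-channel problem to the infinite-channel result of Theorem \ref{thm:infinite} by exploiting the compact support of the data away from the boundary and treating the difference between the Dirichlet and whole-line solution operators as a smoothing, time-decaying perturbation. First I would establish that the vorticity remains supported away from the boundary for all time. Since equation \eqref{eq:linEuler1} reads $\dt \omega = - f \p_x L_t \omega$ and $f$ is compactly supported in a set $K_f \Subset (0,1)$, the time derivative $\dt\omega$ is supported in $K_f$; hence $\omega(t)-\omega_0$ is supported in $K_f$, and with $\omega_0$ supported in $K_0 \Subset (0,1)$ the solution $\omega(t)$ stays supported in the fixed compact set $\tilde K = K_0 \cup K_f$, at distance $\delta>0$ from $\{0,1\}$, uniformly in $t$.

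Next I would decompose the finite-channel operator as $L_t^{[0,1]} = L_t^{\R} + R_t$, where $L_t^{\R}$ is the whole-line operator governing the dynamics in Theorem \ref{thm:infinite} and $R_t = L_t^{[0,1]} - L_t^{\R}$ is the boundary correction. Passing to the Fourier variable $k$ dual to $x$, the operator $\p_x^2 + (g(\p_z - t\p_x))^2$ becomes the variable-coefficient elliptic operator $-k^2 + (g(\p_z - ikt))^2$ in $z$, which under the assumption $g \geq 1$ is uniformly elliptic with Green's function decaying on the scale $|k|\langle t\rangle^{-1}$. The correction $R_t$ is precisely the harmonic (image) part enforcing the Dirichlet condition; because both the source $\omega(t)$ and the multiplier $f$ are supported in $\tilde K$ at distance $\delta$ from the boundary, and because the zero-mean condition forces $|k| \geq 2\pi/L$, the kernel of $f \p_x R_t$ restricted to $\tilde K$ is smooth and bounded by $C\, e^{-c|k|(1+t)\delta}$. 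Thus $R_t$ gains arbitrarily many derivatives and is exponentially small in both frequency and time.

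With this in hand I would re-run the stability argument of Theorem \ref{thm:infinite}. The smoothing and smallness of $R_t$ mean that the extra term $f \p_x R_t \omega$ appearing in the energy and Lyapunov-functional estimates is a lower-order, integrable-in-time contribution that can be absorbed into the constants without affecting the structural damping mechanism, and in particular without interfering with the perturbative construction permitted by the smallness condition $\|f\|_{W^{1,\infty}} L < c$. As in the infinite-channel case, the core is to first obtain the quantitative $L^2$ stability estimate and then bootstrap through the $H^j$ scale by the same short inductive argument, from which the Gevrey bound \eqref{eq:16} follows in the sense of Theorem \ref{thm:infinite}.

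\emph{The main obstacle.} The principal difficulty is to control $R_t$ quantitatively and uniformly in time in every $H^j$ norm, and to verify that its contribution, summed across all the inductive steps, does not degrade the Gevrey growth rate $(1+j)^{js}$. The decisive point is that the exponential decay of the Dirichlet-versus-whole-line Green's function for sources separated from the boundary beats any polynomial loss incurred in passing from $H^j$ to $H^{j+1}$, so that $R_t$ behaves like an arbitrarily smoothing operator and its contribution is absorbed at each step with a geometric gain. Making this gain rigorous, uniformly in $t$ and $k$ and compatibly with the iteration, is the heart of the argument; it is the finite-channel analogue of the boundary correction terms already handled in the $H^1$ stability analysis of \cite{Zill5}.
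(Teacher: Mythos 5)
Your overall plan --- propagation of the compact support under the flow (your first paragraph is precisely Lemma \ref{lem:support} of the paper) and splitting the finite-channel stream function into an ``interior'' part plus a boundary correction built from homogeneous solutions (the paper's decomposition \eqref{eq:5} into $\psi_j$ plus multiples of $u_0,u_1$) --- agrees with the paper's strategy. The genuine gap is your quantitative claim about $R_t$: the kernel of $f\p_x R_t$ is \emph{not} bounded by $C e^{-c|k|(1+t)\delta}$, and in fact its amplitude does not decay in time at all. Conjugating by the gauge $e^{ikty}$ transforms $-k^2+(g(\p_y-ikt))^2$ into the $t$-independent elliptic operator $-k^2+(g\p_y)^2$, so the Green's function is $e^{ikt(y-y')}$ times a $t$-independent kernel with decay scale $|k|^{-1}$; likewise the homogeneous solutions are $e^{ikty}u_0(0,y)$ and $e^{ikt(y-1)}u_1(0,y)$ with $t$-independent modulus. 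The first-order term $ikt$ is pure gauge and contributes only oscillation, never an enhanced elliptic scale $|k|(1+t)$. Hence the boundary correction is only $O(e^{-c|k|\delta})$ \emph{uniformly in time}; since the problem decouples in $k$ and each $k$ is fixed, a time-independent perturbation of size $\epsilon_k = e^{-c|k|\delta}$ fed into a Gronwall/energy argument produces growth like $\exp(\epsilon_k t)$, so ``absorbing it into the constants'' does not yield bounds uniform in $t$, which is what stability requires.

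The missing idea is that the time decay of the boundary terms is purely oscillatory and must be extracted by integrating by parts against the phases $e^{ikty}$ --- and this is only possible because $\omega$ and $f$ vanish at the boundary. This is exactly Proposition \ref{prop:Dirichlet} of the paper: each factor $\langle t\rangle^{-1}$ costs one derivative of $\omega$, and the resulting bound is in terms of the moving-frame weighted quantities $\bigl(\sum_\eta \langle \eta-kt\rangle^{-(1-\delta)}|\widetilde{\p_y^{j}\omega}(\eta)|^2\bigr)^{1/2}$, not plain $H^j$ norms. Because the decay obtained this way is algebraic and coupled to these norms, one cannot simply re-run the infinite-channel proof; the paper instead modifies the multiplier $A(t)$ by the additional decreasing factor $\exp(-\int_0^{t}\langle \tau\rangle^{-2\beta}(1+(\eta/k-\tau)^{2})^{-2\gamma}\,d\tau)$, whose dissipation (the second term in \eqref{eq:10}) is precisely what absorbs the oscillatory boundary contributions at each step of the induction in $j$. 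A secondary point you pass over: in the channel the $H^{-1}_t$ norm is no longer given by a Fourier multiplier, and comparing it with the weight $\langle \eta/k-t\rangle^{-2}$ requires the explicit computation of $\Lambda_t[e^{i\eta y}]$ cited from \cite{Zill3}; this is a technical but genuine ingredient of the $j=0$ step, as is the $H^1$ analysis of \cite{Zill5} that your last paragraph correctly identifies as the relevant precedent.
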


Here, the core of the problem lies in establishing stability in $H^{1}$ as in
\cite{Zill5}, from which the desired quantitative higher Sobolev and Gevrey
regularity results then follow by induction.

Finally, we note that it is not necessary to impose the condition of compact
support, but that a high order of vanishing on the boundary is sufficient to
establish stability in Sobolev regularity (or Gevrey regularity).
\begin{thm}[Finite regularity for a finite channel]
  \label{thm:finite2}
  Let $g$ satisfy the same assumptions as in the previous theorem and suppose
  that there exists $N \in \N_0$ such that $f$ and the initial vorticity
  perturbation $\omega_0$ vanish to order $N$ on the boundary.
  Suppose further that
  \begin{align*}
    \|f\|_{\mathcal{C}^j} + \|g\|_{\mathcal{C}^j} &\leq D_1^{1+j} (1+j)^{j s}
  \end{align*}
  for all $j \leq N$.
  Then there exists $C=C(D_1,L)$ such that for all $j \leq N$ and all $t\geq 0$
  it holds that
  \begin{align*}
    \|\omega(t)\|_{H^{j}} \leq  C^{1+j}  \|\omega_0\|_{H^{j}}.
  \end{align*}
\end{thm}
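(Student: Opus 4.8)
The plan is to combine a propagation-of-vanishing argument with the inductive energy estimate already used for the infinite channel in Theorem~\ref{thm:infinite}; the only genuinely new ingredient is that the finite order of vanishing of $f$ and $\omega_0$ forces all boundary contributions up to order $N$ to disappear, so that the finite-channel estimate reduces to the infinite-channel one for $j\leq N$. I would first record the base case: since $g$ satisfies the hypotheses of Theorem~\ref{thm:finite} (in particular the smallness condition), quantitative $L^2$ stability is available, together with the $H^1$ stability that accounts for boundary effects as in \cite{Zill5}. This settles $j=0$ and $j=1$ and, more importantly, provides the uniform-in-time control of the operator $\p_x L_t$ driving the evolution \eqref{eq:linEuler1}. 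I would also note at the outset that $\p_x$ commutes with $f$, with $g$, and with $L_t$ (since $[\p_x,\p_z-t\p_x]=0$), so that $x$-derivatives are harmless and the full $H^j$ estimate reduces to controlling $z$-derivatives.

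The first new step is to show that the order of vanishing on the boundary is preserved by the flow. Evaluating \eqref{eq:linEuler1} and its $z$-derivatives on $z=0$ and $z=1$ and expanding $\p_z^k(f\,\p_x L_t\omega)$ by the Leibniz rule, every resulting term carries a factor $\p_z^m f$ with $m\leq k\leq N$, which vanishes on the boundary because the derivatives of $f$ up to order $N$ do. Hence $\dt\p_z^k\omega|_{\text{bdry}}=0$ for all $k\leq N$, and since $\omega_0$ vanishes to order $N$ the same holds for $\omega(t)$ at every time. In particular $\p_z^j\omega(t)$ vanishes on the boundary for each $j\leq N$.

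With this in hand I would run the energy estimate by induction on $j$. Differentiating \eqref{eq:linEuler1} and testing $\p_z^j\omega$ against the equation yields
\begin{align*}
  \tfrac12\dt \|\p_z^j\omega\|_{L^2}^2
  = -\langle \p_z^j\omega,\, f\p_x L_t \p_z^j\omega\rangle
    -\langle \p_z^j\omega,\, [\p_z^j, f\p_x L_t]\omega\rangle .
\end{align*}
The symmetric first term is treated exactly as in the infinite channel, except that the integration by parts in $z$ now generates boundary integrals; these vanish because $f|_{\text{bdry}}=0$ and $\p_z^j\omega|_{\text{bdry}}=0$ by the previous step, while the Dirichlet condition on $L_t$ removes the remaining traces. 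The commutator term expands into lower-order expressions in $\omega$ multiplied by derivatives $\p_z^m f$ and $\p_z^m g$ with $m\leq j\leq N$, which are bounded by the $\mathcal{C}^m$ hypotheses; together with the inductive control of $\|\omega(t)\|_{H^{j-1}}$ and the uniform bound on $\p_x L_t$, a Grönwall argument closes the estimate with a constant of the required geometric form $C^{1+j}$.

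I expect the main obstacle to be the commutator $[\p_z^j,f\p_x L_t]$, and specifically the term $[\p_z^j,L_t]$: although $L_t$ is bounded on $L^2$ uniformly in time by the damping mechanism, each $z$-derivative falling on the time-growing factor $g(\p_z-t\p_x)$ inside $L_t$ must be reabsorbed using the $\mathcal{C}^j$ bounds on $g$ \emph{without} sacrificing the uniform-in-time control, and the accumulated constants must be organized so that the dependence on $j$ stays geometric rather than factorial. Once this bookkeeping is arranged, the chain of $H^j$ estimates gives the stated bound for all $j\leq N$.
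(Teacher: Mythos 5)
Your propagation-of-vanishing step is sound and is exactly the paper's Lemma \ref{lem:support}, but the central claim of your reduction --- that the finite order of vanishing of $f$ and $\omega_0$ ``forces all boundary contributions up to order $N$ to disappear'' so that the problem becomes the infinite-channel one --- is false, and it is precisely the issue that Section \ref{sec:finite} of the paper is built to handle. The Dirichlet condition forces only the zeroth trace $L_t\omega|_{y=0,1}=0$; it says nothing about the Neumann data $\p_y L_t\omega|_{y=0,1}$ or higher traces, and these are generically nonzero even for \emph{compactly supported} $f$ and $\omega_0$. Consequently $\p_y^{j}L_t\omega$ is \emph{not} the solution of the Dirichlet problem with source $\p_y^{j}\omega$ plus coefficient commutators: one must add homogeneous solutions weighted by the traces, as in \eqref{eq:5}, so your step ``the Dirichlet condition on $L_t$ removes the remaining traces'' is where the argument breaks, and with it your computation of $[\p_z^j,L_t]$. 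Worse, the traces grow in time: from the equation one gets, e.g., $\p_y^2 L_t\omega|_{y=0,1}=\bigl(2ikt-g'/g\bigr)\p_y L_t\omega|_{y=0,1}$, and in general $|\p_y^{j}L_t\omega|_{y=0,1}|\lesssim \langle t\rangle^{j-1}|\p_y L_t\omega|_{y=0,1}|$, which is \eqref{eq:4}. The vanishing hypothesis enters not by killing these terms but by permitting $j$ integrations by parts against the oscillation $e^{ikty}$ in the representation $g^2(0)\p_y L_t\omega(0)=\int u_0\,\omega$ of \eqref{eq:8}, which produces the compensating decay $\langle t\rangle^{-j}$ of \eqref{eq:6}; this is Proposition \ref{prop:Dirichlet}, and it is also the only place where the restriction $j\leq N$ actually originates --- your proposal never explains why the scheme should stop at order $N$, which is a symptom of the missing mechanism. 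Since the $H^{3/2+}$ blow-up of \cite{Zill5} is driven exactly by these non-vanishing boundary corrections, no argument that treats them as absent can be faithful to the finite-channel problem.

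A second, independent gap is the closure by Gr\"onwall. Even after the trace terms are controlled they are only borderline integrable in time (of size $\langle t\rangle^{-1}$ with a frequency-localized weight), and for the interior term the operator norm of $f\p_x L_t$ on $L^2$ does not decay at all --- only the fixed-frequency multiplier $\frac{k}{k^2+(\eta-kt)^2}$ is time-integrable, as the paper stresses in the $j=0$ discussion. A Gr\"onwall bound on $\|\p_z^{j}\omega\|_{L^2}^2$ therefore yields constants growing in time, not the uniform bound $C^{1+j}\|\omega_0\|_{H^j}$. The paper instead constructs non-increasing Lyapunov functionals \eqref{eq:9} built on the weight $A(t)$, whose additional exponent $\int_0^{t}\langle \tau \rangle^{-2\beta}(1+(\eta/k-\tau)^{2})^{-2\gamma}\,d\tau$ is included specifically so that, via \eqref{eq:10}, the boundary-trace contributions estimated through Proposition \ref{prop:Dirichlet} can be absorbed in the induction \eqref{eq:13}. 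To repair your proof you would need to reinstate both ingredients: the quantitative trace estimates for $\p_y^{j}L_t\omega|_{y=0,1}$, valid only for $j\leq N$, and the modified weight in place of the plain energy plus Gr\"onwall.
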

In particular, the Gevrey stability result of Theorem \ref{thm:finite} is still valid
if we only assume that $f$ and $\omega_0$ vanish of infinite order.

\section{The Infinite Channel Case}
\label{sec:infinite}

As a starting point we consider the problem \eqref{eq:linEuler1} in the infinite
channel $\T_L\times \R$ and establish the following quantitative improvement of Theorem
4.5 in \cite{Zill3}:
\begin{thm}
  \label{thm:quantitative_stability}
  Suppose that $U\in C^{2}(\R)$ is bilipschitz and let $g(z)=U'(U^{-1}(z))$,
  $f(z)=U''(U^{-1}(z))$. Consider the linearized Euler equations around $U$ on
  $\T_{L}\times \R$ and suppose that
  \begin{align*}
    \|f\|_{W^{1,\infty}} L \ll 1.
  \end{align*}
  Suppose further that for some $j \in \N$, $f,g \in W^{1+j, \infty}$. Then the
  solution $\omega$ (in Lagrangian coordinates) satisfies
  \begin{align*}
    \|\omega(t)\|_{\dot{H}^j}^2\leq  C\|\omega_0\|_{\dot{H}^j}^2+ C 2^j \sum_{j_1+j_2\leq j}  \|\omega_0\|_{\dot{H}^{j_1}}^2 \|(f,g)\|_{j_2},
  \end{align*}
  for all times $t\geq 0$. Here, we used the short notations:
  \begin{align*}
    \|f\|_{j}&:= \sup_{j_1+j_2+\dots+ j_N=j} \prod \|\p_z^{j_i}f\|_{L^\infty}, \\
    \|(f,g)\|_{j}&:=\sum_{j_1+j_2=j} (1+\|f\|_{j_1})(1+\|g\|_{j_2}). 
  \end{align*}
\end{thm}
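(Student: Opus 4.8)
The plan is to argue by induction on $j$, taking as base case $j=0$ a quantitative version of the $L^2$ stability of \cite{Zill3}. For $j=0$ the asserted bound reduces to $\|\omega(t)\|_{L^2}^2 \leq C(1+\|f\|_{L^\infty})(1+\|g\|_{L^\infty})\|\omega_0\|_{L^2}^2$, which I would establish by revisiting the Lyapunov functional of \cite{Zill3} and tracking its constants. Working in the $x$-Fourier variable $k$, the operator $L_t$ becomes the elliptic family $(-k^2+(g(\p_z-ikt))^2)^{-1}$, and the essential structural fact is that the shear renders $k L_t$ integrable in $t$ (this is the inviscid damping mechanism itself). One then builds an energy $E(t)$ equivalent to $\|\omega\|_{L^2}^2$, with equivalence constants controlled by $\|(f,g)\|_0$, whose time derivative is sign-definite up to terms absorbed by the smallness condition \eqref{eq:smallness}; this yields the base case with the constant made explicit.

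For the inductive step, note first that since $f$ and $g$ depend only on $z$, the operator $\p_x$ commutes with every coefficient and with $L_t$, so it suffices to propagate $\p_z$-derivatives at each fixed $x$-frequency. Differentiating \eqref{eq:linEuler1} $j$ times in $z$ and applying the Leibniz rule gives
\begin{align*}
  \dt \p_z^j \omega + f \p_x L_t \p_z^j \omega = F_j,
\end{align*}
where $F_j$ collects the commutator and lower-order contributions. These are of two types: terms in which a derivative falls on $f$, producing factors $\p_z^m f$ with $m\geq 1$ paired with $\p_x L_t \p_z^{j-m}\omega$; and terms from the commutator $[\p_z^{j-m}, L_t]$, which expose derivatives of $g$ through the dependence of $L_t$ on $g(\p_z-t\p_x)$. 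In each case the term carries strictly fewer than $j$ derivatives of $\omega$ together with at least one derivative of the coefficients, so that by the inductive hypothesis the $\dot H^{j'}$-norms appearing in $F_j$ are already controlled, while the coefficient factors assemble precisely into the quantities $\|f\|_{j_2}$ and $\|(f,g)\|_{j_2}$.

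The energy estimate for $\p_z^j\omega$ then reuses the base-case Lyapunov functional verbatim: the principal term $f\p_x L_t \p_z^j\omega$ is treated exactly as in the $L^2$ argument and yields self-control of $\|\p_z^j\omega\|_{L^2}$, while $F_j$ is handled as a forcing term. Pairing $F_j$ against the modified energy and integrating in time, the crucial point is again that the time-dependence of $L_t$ makes the relevant operator norms integrable, so that $\int_0^\infty \|F_j\|\,dt$ is dominated by a finite combination of lower-order energies times coefficient norms. Counting the binomial coefficients in the Leibniz and commutator expansions then produces the overall factor $2^j$ and the double sum $\sum_{j_1+j_2\leq j}$, with $j_1$ bookkeeping the derivatives landing on $\omega_0$ and $j_2$ those landing on $(f,g)$.

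The main obstacle I anticipate is the commutator analysis for $L_t$. Because $L_t$ depends on $g$ through $g(\p_z-t\p_x)$, commuting $\p_z^k$ past $L_t$ and bounding the outcome uniformly in $t$ requires expanding $L_t=(\p_x^2+(g(\p_z-t\p_x))^2)^{-1}$ and exploiting the $t$-integrability of its off-diagonal pieces. This is exactly where \eqref{eq:smallness} is indispensable: it guarantees that the perturbative expansion of $L_t$ about the Couette operator converges and that constants degrade only polynomially, leaving room for the $2^j$ factor. Once these commutator bounds are secured with explicit constants, the induction closes by summing the contributions and invoking the base case.
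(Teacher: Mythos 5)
Your base case is in the spirit of the paper, but your inductive step contains a genuine gap: you propose to treat $F_j$ as a forcing term and close via time integration, ``the crucial point is again that the time-dependence of $L_t$ makes the relevant operator norms integrable, so that $\int_0^\infty \|F_j\|\,dt$ is dominated by a finite combination of lower-order energies times coefficient norms.'' This is false as stated: the operator norm of $\p_x L_t$ on $L^2$ does \emph{not} decay in time, since $\sup_{k,\eta} \frac{k}{k^2+(\eta-kt)^2}=\sup_{k,\eta}\frac{k}{k^2+\eta^2}$ is constant; only at \emph{fixed} frequency $(k,\eta)$ is the multiplier integrable in $t$. Consequently, your inductive hypothesis --- uniform-in-time bounds on $\|\p_z^{j'}\omega\|_{L^2}$ for $j'<j$ --- does not imply any time-integrability of $\|F_j(t)\|$ in $L^2$ or in a dual norm, and the Gr\"onwall/Duhamel step does not close. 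What is actually integrable in time is the weighted quantity $\|\p_y^{j'}\omega(t)\|_{H^{-1}_t}^2$ (with the moving weight $\frac{1}{k^2+C^2(\eta-kt)^2}$), and this integrability must be \emph{produced by the energy itself}. The paper's device is the hierarchical Lyapunov functional of \eqref{eq:3}: $E_{j+1}$ contains not just $\langle \p_y^{j+1}\omega, A\p_y^{j+1}\omega\rangle$ (whose derivative yields the dissipation $-C\|\p_y^{j+1}\omega\|_{H^{-1}_t}^2$ via the $\arctan$ weight) but also the lower-order energies $E_{j_2}$ weighted by $\|(f,g)\|_{j_1}^2$, so that their dissipation terms are available to absorb, pointwise in time and via Young's inequality, the commutator contributions, which are bounded by $\|\p_y^{j+1}\omega\|_{H^{-1}_t}\sum_{j_1+j_2=j}\|(f,g)\|_{j_1}\|\p_y^{j_2}\omega\|_{H^{-1}_t}$. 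In other words, one must propagate the strengthened differential inequality \eqref{eq:2}, not merely the norm bounds; your induction carries too little information to control the commutators.

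Two secondary corrections. First, you misattribute the role of the smallness condition: it is not needed to make a perturbative (Neumann-series) expansion of $L_t$ about the Couette operator converge --- no such expansion is used, and the elliptic estimate $\|\p_x L_t\omega\|_{H^1_t}\leq \|\tfrac{1}{g}\|_{C^1}\|\omega\|_{H^{-1}_t}$ follows directly by testing the equation, using only that $g$ is bounded below (bilipschitz hypothesis). Smallness of $\|f\|_{W^{1,\infty}}L$ is used once, to absorb the principal coupling term $2\langle f\p_x L_t\omega, A\omega\rangle$ into the dissipation generated by $\dt A$. Second, the commutator analysis does not require expanding $L_t$: the paper handles $\p_y^{j_2}L_t\omega$ by observing that it solves the same elliptic problem with commutator source terms $[(-k^2+(g(\p_z-ikt))^2),\p_y^{j_2}]L_t\omega$, and iterates the elliptic estimate on these, which is where the quantities $\|g\|_{j_3}$ enter with controlled constants.
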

We note that by definition of $\|\cdot\|_{j}$ it holds that
\begin{align*}
  \|f\|_{j_1} \|f\|_{j_2}\leq \|f\|_{j_1+j_2},
\end{align*}
which helps to simplify recursive (commutator) estimates of the form
\begin{align*}
  a_0&=1,\\
  a_{j+1}&\leq \sum_{j_1+j_2=j} \|f\|_{j_1} a_{j_2}.
\end{align*}
We further remark that if $f \in \mathcal{G}_{s}^\infty$, then $\|f\|_{j}$
satisfies analogous estimates to $\|f\|_{\mathcal{C}^j}$:
\begin{align*}
  \|f\|_{j} &= 2^{j} \sup_{j_1+j_2+\dots+ j_N=j} \prod \|\p_z^{j_i}f\|_{L^\infty} \\
  &\leq 2^{j} \sup \prod C^{1+j_i} (1+j_i)^{j_i} \\
  &\leq 2^{j} C^{2j} (1+j)^{j}.
\end{align*}

In \cite{Zill3} we subsumed the precise bound into a control by $C_j
\|\omega_0\|_{H^j}^2$ for a non-explicit constant $C_j$ and we imposed the stronger constraint that
$\|f\|_{W^{j+1,\infty}} L \ll 1$. However, as already noted and proven in
\cite{Zill5}, \cite{Zill6} only smallness in $W^{1,\infty}$ is actually used in
the proof.

This quantitative control of constants then immediately allows us to establish
the stability in Gevrey classes expressed in Theorem \ref{thm:infinite}.

\begin{proof}[Proof of Theorem \ref{thm:infinite}]
  Let $s \in [1,\infty)$ be given and $U'' \in \mathcal{G}_s^{\infty}, \omega_0
  \in \mathcal{G}_s$. There thus exists constants $D_{1},D_2$ such that
  \begin{align*}
    \|(f,g)\|_{j}\leq D_1^{1+j} (1+j)^{j s}, \\
    \|\omega_0\|_{H^{j}}\leq D_2^{1+j} (1+j)^{j s},
  \end{align*}
  for any $j \in \N$. Applying Theorem \ref{thm:quantitative_stability} we hence
  obtain that
  \begin{align*}
    \|\omega(t)\|_{H^j}^2 &\leq C(\|\omega_0\|_{H^j}^2 +2^{j} \sum_{j_1+j_2=m}C_{j_1} \|\omega_0\|_{H^{j_2}}^2 )\\
                          &\leq C D_1^{2(1+j)} + C 2^{j}\sum_{j_1+j_2=j} C(1+C^{1+j_1}D_2^{2(1+j_1)}(1+j_1)^{2j_1 s}) D_2^{2(1+j_2)} (1+j_2)^{2j_2 s}.
  \end{align*}
  We now note that
  \begin{align*}
    (1+j_1)^{2j_1 s} (1+j_2)^{2j_2 s} \leq (1+j)^{2(j_1+j_2)s}= (1+j)^{2j s}
  \end{align*}
  and (very) roughly estimate all other powers involved in terms of
  \begin{align*}
    D=100\max(C,D_1,D_2)^{2}.
  \end{align*}
\end{proof}

\begin{proof}[Proof of Theorem \ref{thm:quantitative_stability}]
  In the following we retrace and improve the proof in \cite{Zill5} and
  \cite[Section 4]{Zill3} in order to obtain a quantitative control of the
  constants in the stability estimate.
  
  We iteratively construct a family of Lyapunov functionals. That is, we claim
  that for all $j \in \N$ there exist non-increasing energies $E_j(t)$ such
  that
  \begin{align}
    \label{eq:1}
    C \|\omega(t)\|_{\dot{H}^{j}}^2&\leq E_{j}(t)\leq 2 \|\omega(t)\|_{\dot H^{j}}^2 + C^{j}\sum_{j_1+j_2=j, j_2\neq j} \|(f,g)\|_{j} \|\omega(t)\|_{H^{j_2}}^2.
  \end{align}
  The statement of the theorem then immediately follows by estimating
  \begin{align*}
    C\|\omega(t)\|_{\dot{H}^{j}}^2 \leq E_j(t)\leq E_j(0)\leq 2 \|\omega_0\|_{H^{j}}^2 + C^{j}\sum_{j_1+j_2<j} \|(f,g)\|_{W^{j_1,\infty}} \|\omega_0\|_{H^{j_2}}^2 .
  \end{align*}
  Here, it turns out that the main challenge lies in constructing the first
  energy functional $E_0(t)$ and establishing sufficiently good control of $\dt
  E_0(t)$. Energies $E_j(t)$ with
  larger $j$ can then be constructed inductively.\\

  \underline{The case $j=0$:}
  
  In order to introduce ideas, let us recall the damping mechanism, known as the
  Orr mechanism, in case of Couette flow $U(y)=y$. In this case
  $\omega(t,x,y)=\omega_0(x-ty,y)$ and as a result
  \begin{align*}
    \p_x \Delta^{-1} \omega \leadsto \frac{ik}{k^2+\eta^2} \tilde{\omega}_0(k,\eta+kt),
  \end{align*}
  where $\tilde{\omega}$ denotes the Fourier transform.
  Changing to coordinates moving with the flow $(x+ty,y)$ and thus $(k,\eta-kt)$
  we thus obtain the multiplier
  \begin{align*}
    \frac{k}{k^2+(\eta-kt)^2} \tilde{\omega}_0(k,\eta).
  \end{align*}
  This multiplier illustrates the main properties of the damping mechanism:
  \begin{itemize}
  \item As $t\rightarrow \infty$ the multiplier $\frac{k}{k^2+(\eta-kt)^2}$
    tends to zero (at an algebraic rate) and the velocity hence asymptotically
    converges in $L^2$.
  \item In contrast $\omega(t,x,y)=\omega_0(x-ty,y)$ does not converge strongly
    in $L^2$ but only weakly.
  \item While $\frac{k}{k^2+(\eta-kt)^2}$ decays after the time
    $t_c=\frac{\eta}{k}$ before that time the multiplier is actually increasing.
    Furthermore the operator norm on $L^2$,
    \[\sup_{k,\eta}
      \frac{k}{k^2+(\eta-kt)^2}=\sup_{k,\eta} \frac{k}{k^2+\eta^2}\] does not
    improve in time.
  \item However, if we can fix $k$ and $\eta$, then $\frac{k}{k^2+(\eta-kt)^2}$ is
    integrable in time.
  \end{itemize}
  Building in particular on the last property we thus aim to construct an energy
  $E_0(t)$ such that $-\dt E_0(t)\geq 0$ controls
  \begin{align*}
    |\langle \omega, U''(y)\p_x \Delta^{-1}\omega \rangle|.
  \end{align*}\\
  
  As the coefficient functions in problem \eqref{eq:linEuler1} do not depend on
  $x$ the problem decouples with respect to Fourier modes $k$ in $x$ and we may
  without loss of generality restrict to considering $\omega$ being restricted
  to a single arbitrary but fixed mode $k$. If $g\geq C>0$ we define the
  $H^{1}_t$ energy by
  \begin{align*}
    \|u\|_{H^1_{t}}^2= \int k^2 |u|^2 + C^2 |(\p_z-ikt)u|^2
  \end{align*}
  and we define the dual $H^{-1}_t$ energy of a function $u \in L^2$ in terms of a Fourier
  weight
  \begin{align*}
    \|u\|_{H^{-1}_t}^2 := \sum_k\int |\tilde{u}|^2 \frac{1}{k^2+C^2(\eta-kt)^2} d\eta.
  \end{align*}
  In particular, we note that this multiplier is integrable in time and hence
  define the Fourier multiplier $A(t)$ by
  \begin{align*}
    \mathcal{F}(Au) = \exp(c \arctan(C(\eta-kt)) \tilde{u}(k,\eta),
  \end{align*}
  where $0<c<1$ is a constant. This multiplier is non-increasing and it holds
  that for any $u$ not depending on time
  \begin{align*}
    \|u\|_{H^{-1}_t}^2 \exp(-c\pi) \leq -\dt \langle u, A u \rangle \leq \exp(c\pi)\|u\|_{H^{-1}_t}^2.
  \end{align*}
  We then make the ansatz
  \begin{align*}
    E_0(t):= \langle \omega(t), A(t) \omega(t) \rangle.
  \end{align*}
  As $\exp(c \arctan(C(\eta-kt))$ is bounded above and below it holds that
  \begin{align*}
    \exp(-c\pi)\|\omega(t)\|_{L^2}^2 \leq E_0(t) \leq \exp(c\pi) \|\omega(t)\|_{L^2}^2,
  \end{align*}
  so \eqref{eq:1} holds. It remains to verify that $E_0(t)$ is non-increasing.
  We estimate
  \begin{align*}
    \frac{d}{dt} E_0(t) = \langle \omega(t), (\dt A) \omega(t) \rangle + 2 \langle f\p_x L_t \omega(t), A(t) \omega(t) \rangle \\
    \leq -\exp(-c\pi) \|\omega(t)\|_{H^{-1}_t}^2 + 2 \langle f\p_x L_t \omega(t), A(t) \omega(t) \rangle.
  \end{align*}
  Using duality we then estimate
  \begin{align*}
    |\langle f\p_x L_t \omega(t), A(t) \omega(t) \rangle| \leq \|A(t) \omega(t)\|_{H^{-1}_t} \|f\p_x L_t \omega(t)\|_{H^1_t} \\
    \leq \exp(c\pi) \|\omega(t)\|_{H^{-1}_t} \|f\|_{W^{1,\infty}} \|\p_x L_t \omega(t)\|_{H^1_t}.
  \end{align*}
  Lastly, recall that $L_t \omega$ solves
  \begin{align*}
    (-k^2+(g(\p_y-ikt))^2) L_t \omega = \omega.
  \end{align*}
  Testing this equation with $-\frac{1}{g} L_t \omega$ and using that $g$ is
  bounded below we thus obtain that
  \begin{align*}
    \|L_t\omega\|_{H^{1}_t}^2 \leq \langle -\frac{1}{g L_t \omega}, \omega  \rangle \leq \|\omega\|_{H^{-1}_t} \|\frac{1}{g}\|_{C^1}\|L_t\omega\|_{H^{1}_t}. 
  \end{align*}
  and thus
  \begin{align*}
    \|\p_x L_t \omega(t)\|_{H^1_t} \leq \|\frac{1}{g}\|_{C^1} \|\omega(t)\|_{H^{-1}_t}.
  \end{align*}
  Thus, if $\|f\|_{W^{1,\infty}}$ is sufficiently small, it holds that
  \begin{align*}
    \frac{d}{dt} E_0(t)  + C \|\omega(t)\|_{H^{-1}_t}^2 \leq 0
  \end{align*}
  and in particular if follows that $E_0(t)$ non-increasing.\\

  \underline{The induction step:}  \\
  
  Based on the above estimate we claim that in addition to \eqref{eq:1} it holds
  that
  \begin{align}
    \label{eq:2}
    \begin{split}
      \frac{d}{dt}E_{j}(t)&\leq - C\|\p_y^{j}\omega(t)\|_{H^{-1}_t}^2 - C
      \|(f,g)\|_{1}^2\|\p_y^{j-1}\omega(t)\|_{H^{-1}_t}^2 \\ & \quad - \dots -C
      \|(f,g)\|_{j}^2 \|\omega(t)\|_{H^{-1}_t}^2,
    \end{split}
  \end{align}
  which we have just established for $j=0$. We then make the ansatz
  \begin{align}
    \label{eq:3}
    \begin{split}
      E_0(t)&= \langle \omega, A \omega \rangle, \\
      E_{j+1}(t)&:= 2 \langle \p_y^{j+1}\omega, A \p_y^{j+1}\omega \rangle + 4C
      \sum_{j_1+j_2=j} \|(f,g)\|_{j_1}^2 E_{j_2}(t).
    \end{split}
  \end{align}
  In particular, by construction this satisfies \eqref{eq:1} for every $j$. It
  remains to be shown that $E_{j+1}(t)$ satisfies \eqref{eq:2} and hence is
  non-increasing.

  Thus, consider the $\p_y^{j+1}$ derivative of the linearized Euler equations:
  \begin{align*}
    \dt \p_y^{j+1} \omega + f \p_x L_t \p_y^{j+1}\omega = [f\p_x L_t,\p_y^{j+1}]\omega.
  \end{align*}
  By the construction of $A(t)$ we obtain that
  \begin{align*}
    \frac{d}{dt} 2 \langle \p_y^{j+1}\omega, A \p_y^{j+1}\omega \rangle &= 2 \langle \p_y^{j+1}\omega, \dot A \p_y^{j+1}\omega \rangle \\
                                                                        & \quad +4 \langle -f \p_x L_t \p_y^{j+1} \omega , A \p_y^{j+1}\omega \rangle + 4 \langle [f \p_x L_t, \p_y^{j+1}] \omega , A \p_y^{j+1}\omega \rangle \\
                                                                        &\leq - C\|\p_y^{j}\omega(t)\|_{H^{-1}_t}^2 +  4 \langle [f \p_x L_t, \p_y^{j+1}] \omega , A \p_y^{j+1}\omega \rangle.
  \end{align*}
  Using \eqref{eq:2} up to $j$ and our ansatz \eqref{eq:3} it thus suffices to
  show that
  \begin{align*}
    \langle [f \p_x L_t, \p_y^{j+1}] \omega , A \p_y^{j+1}\omega \rangle &\leq C \|\p_y^{j+1}\omega\|_{H^{-1}_t}\sum_{j_1+j_2=j} \|(f,g)\|_{j_1}|\p_y^{j_2}\omega\|_{H^{-1}_t},
  \end{align*}
  at which point we can then conclude our estimate by using Young's inequality.

  Indeed, by duality we may control
  \begin{align*}
    \langle [f \p_x L_t, \p_y^{j+1}] \omega , A \p_y^{j+1}\omega \rangle \leq \|A \p_y^{j+1}\|_{H^{-1}_t} \|[f \p_x L_t, \p_y^{j+1}] \omega \|_{H^{1}_t},
  \end{align*}
  and by construction of $A$
  \begin{align*}
    \|A \p_y^{j+1}\omega\|_{H^{-1}_t} \leq c \|\p_y^{j+1}\omega \|_{H^{-1}_t}.
  \end{align*}
  We may thus focus on computing and estimating the commutator. Here, the $j+1$
  derivatives may fall either on $f$ or on $\p_xL_t$ and we can estimate
  \begin{align*}
    \|(\p_y^{j_1}f) \p_y^{j_2} \p_x L^{t} \omega \|_{H^{1}_t} \leq \|\p_y^{j_1}f\|_{L^\infty} \|\p_y^{j_2} \p_x L^{t} \omega \|_{H^{1}_t} +  \|\p_y^{j_1+1}f\|_{L^\infty}\|\p_y^{j_2} \p_x L_t \omega \|_{L^2}.
  \end{align*}
  Therefore using the structure of $\|(f,g)\|_{j}$ we may further reduce to
  studying $ \|\p_y^{j_2} \p_x L_t \omega \|_{H^{1}_t}$. Using the definition of
  $L_t$ and the fact that in this setting of an infinite channel we need not
  worry about boundary conditions, we observe that $\p_y^{j_2} L_t \omega$ is
  the unique solution of
  \begin{align*}
    (-k^2+(g(\p_z-ikt))^2) \p_y^{j_2} L_t \omega = \p_y^{j_2} \omega + [(-k^2+(g(\p_z-ikt))^2), \p_y^{j_2}] L_t \omega .
  \end{align*}
  Again using the ellipticity of this problem (see also \cite{Zill3}) it thus
  follows that
  \begin{align*}
    \|\p_y^{j_2} L_t \omega\|_{H^1_t}^2 \leq \|\p_y^{j_2} \omega\|_{H^{-1}_t}^2 + \|[(-k^2+(g(\p_z-ikt))^2), \p_y^{j_2}] L_t \omega\|_{H^{-1}_t}^2.
  \end{align*}
  Inductively repeating this argument for the commutator on the right-hand-side,
  we may estimate
  \begin{align*}
    \|\p_y^{j_2} L_t \omega\|_{H^1_t}^2 \leq \|\p_y^{j_2} \omega\|_{H^{-1}_t}^2 + \sum_{j_3+j_4=j_2} \|g\|_{j_3}^2 \|\p_y^{j_4} \omega\|_{H^{-1}_t}^2,
  \end{align*}
  which concludes the proof.
\end{proof}

We thus observe that the main challenge of the proof in this infinite channel
setting is given by establishing the result at the level of $L^2$. Higher
Sobolev space estimates may then be obtained inductively by using commutator
estimates.  Furthermore Gevrey stability estimates then correspond to good quantitative control
of the constants in these estimates.

As one of the main results of \cite{Zill5}
we showed that in the setting of a finite channel additional corrections due to
boundary effects have to be taken into account and are generally not negligible,
leading to asymptotic instability. However, in the special case where $\omega$
and $f$ vanish to sufficiently high order on the boundary this instability does
not manifest (up to this order). In particular, as we show in the following
section, if the perturbation and $f$ are compactly supported as in the setting
considered by Jia \cite{jia2019linear} the above proof essentially extends to
the setting with boundary with some minor modifications.

\section{The Finite Channel Case}
\label{sec:finite}

In this section we consider the setting of a finite channel $\T_L \times [0,1]$
\begin{align}
  \label{eq:channel}
  \begin{split}
    \dt \omega + f \p_x L_t \omega &=0, \\
    (\p_x^2+(g(\p_y-t\p_x))^2)L_t \omega &=0, \\
    L_t \omega |_{y=0,1}&=0.
  \end{split}
\end{align}
Here, in addition to technical challenges such finding suitable basis
representations to replace the Fourier transform a major obstacle is given by
the boundary conditions imposed on the stream function. Indeed, as one of the
main results of \cite{Zill5} we showed that these corrections are generically
not integrable in time and results in blow-up in $H^s, s>\frac{3}{2}$ if $f
\omega_0$ does not vanish on the boundary.

That is, while $L_t \omega(t)$ is prescribed to satisfy impermeable wall
conditions (which equals zero Dirichlet conditions after removing the $x$
average), $\p_y^{j} L_t \omega$ is not given by the unique solution to
\begin{align*}
  (-k^2+(g(\p_y-ikt))^2) \psi &= \p_y^{j} \omega + [(-k^2+(g(\p_y-ikt))^2), \p_y^{j}] L_t \omega, \\
  \psi|_{y=0,1}&=0,
\end{align*}
since generically $\p_y^{j} L_t \omega|_{y=0,1}\neq 0$.

Hence, in order to compute $\p_y^{j}L_t \omega$ we need to include additional
boundary corrections:
\begin{align}
  \label{eq:5}
  \begin{split}
    \p_y^{j}L_t \omega &= L_t \p_y^{j} \omega + L_t [(g(\p_y-ikt))^2, \p_y]L_t \omega \\
    & \quad + (\p_y^{j}L_{t} \omega)(0) e^{ikty}u_1 + (\p_y^{j}L_{t} \omega)(1)
    e^{ikt(y-1)}u_2,
  \end{split}
\end{align}
where $e^{ikty} u_1, e^{ikt(y-1)}u_2$ are homogeneous solutions of the stream
function problem (see Proposition \ref{prop:Dirichlet}).

However, if $\omega_0$ and $f$ happen to be supported in $I \subset (0,1)$
\begin{align}
  \label{eq:compact}
  \text{supp}(\omega_0) \subset \T \times I, \text{supp}(f) \subset I,   
\end{align}
this instability can be avoided as shown in \cite{jia2019linear} and
\cite{ionescu2018inviscid} for the linear and nonlinear Euler equations,
respectively.

In the following we show that under this support assumption linear stability in
arbitrary Sobolev spaces and Gevrey regularity follow as an extension of the
$H^{1}$ stability results in \cite{Zill5} and thus provide a new short proof of
the former result (for a different class of shear flows). Furthermore, we also
consider the setting where $f$ and $\omega_0$ vanish of (at least) a finite
order $N$:
\begin{align}
  \tag{$V_N$}
  \label{eq:vanishing}
  \forall j \leq N, \p_y^j\omega_0(0)=\p_y^j\omega_0(1)=\p_y^jf(0)=\p_y^jf(1)=0.
\end{align}

As a first observation we note that \eqref{eq:compact} and \eqref{eq:vanishing}
are preserved under the evolution.
\begin{lem}
  \label{lem:support}
  Let $I \subset (0,1)$ be a closed subinterval and suppose that $f \in C^1$,
  $\omega_0 \in L^2$ satisfy \eqref{eq:compact}. Then for any $t\geq 0$ the
  solution $\omega$ of \eqref{eq:channel} satisfies
  \begin{align*}
    \text{supp}(\omega(t)-\omega_0) \subset \T \times I.
  \end{align*}

  Similarly, if $f \in C^{N+1}, \omega_0\in H^{N+1}$ satisfy
  \eqref{eq:vanishing}, then $\omega(t)-\omega_0$ vanishes to order at least $N$
  on the boundary.
\end{lem}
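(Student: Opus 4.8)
\textbf{Proof plan for Lemma~\ref{lem:support}.}

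The plan is to treat both statements as conservation laws for the support (respectively the order of vanishing) of the difference $w(t) := \omega(t) - \omega_0$, and to exploit the crucial structural fact that the forcing term in the evolution equation is multiplied by $f$, which is itself supported in $I$ (respectively vanishes to order $N$ on the boundary). The key observation is that $w(t)$ satisfies
\begin{align*}
  \dt w + f \p_x L_t \omega = 0,
\end{align*}
since $\omega_0$ is time-independent, so that $w(t) = -\int_0^t f\, \p_x L_s \omega(s)\, ds$. Thus the support and vanishing properties of $w(t)$ are inherited directly from those of the integrand, and the factor $f$ does the essential work: wherever $f$ vanishes, $w(t)$ vanishes as well, regardless of the behavior of $\p_x L_s \omega$.

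For the first statement, I would argue pointwise in $(x,y)$ for $y$ outside $I$. Since $\supp(f) \subset I$, the product $f\, \p_x L_s \omega(s)$ vanishes on $\T \times (\,[0,1]\setminus I\,)$ for every $s$, hence the time integral defining $w(t)$ vanishes there, giving $\supp(w(t)) \subset \T \times I$ immediately. One should only check that $\p_x L_s \omega(s)$ is well-defined and integrable in $s$ on $[0,t]$, which follows from the ellipticity of the stream function problem (the same bound $\|L_t\omega\|_{H^1_t} \leq \|\tfrac1g\|_{C^1}\|\omega\|_{H^{-1}_t}$ used in the proof of Theorem~\ref{thm:quantitative_stability}) together with $\omega \in L^2$; no smoothness of $\p_x L_s\omega$ near the boundary is needed because $f$ kills that region.

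For the second statement, I would differentiate $w(t)$ up to order $N$ in $y$ and evaluate at the boundary. Applying $\p_y^m$ with $m \leq N$ to the identity $w(t) = -\int_0^t f\,\p_x L_s\omega(s)\,ds$ and using the Leibniz rule gives $\p_y^m w(t) = -\int_0^t \sum_{a+b=m}\binom{m}{a}(\p_y^a f)(\p_y^b \p_x L_s \omega(s))\,ds$. At $y = 0,1$ every term with $a \leq N$ contains a factor $\p_y^a f$ evaluating to zero by \eqref{eq:vanishing}, so each summand vanishes on the boundary provided the complementary factor $\p_y^b \p_x L_s \omega$ is finite there. Since $f \in C^{N+1}$ and $\omega_0 \in H^{N+1}$, elliptic regularity for $L_s$ (iterated as in the commutator argument of the previous section) gives enough boundary regularity for $\p_y^b \p_x L_s \omega$ with $b \leq m \leq N$, and the resulting bounds are integrable in $s$. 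Hence $\p_y^m w(t)|_{y=0,1} = 0$ for all $m \leq N$, which is the claimed order of vanishing.

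The main obstacle is the \emph{boundary} regularity and integrability of $\p_y^b \p_x L_s \omega$: unlike the infinite-channel case, $\p_y^b L_s\omega$ does not solve a clean Dirichlet problem, and one must contend with the boundary correction terms recorded in \eqref{eq:5}. I expect that the cleanest route is to avoid differentiating $L_s\omega$ to high order directly and instead bootstrap: once $\supp(w(t)-\omega_0)$ is controlled by the first part, one knows $\omega(t)$ agrees with $\omega_0$ near the boundary, so the source $f\,\p_x L_s\omega$ for $\p_y^m w$ is governed entirely by data that vanishes to order $N$ there, and the boundary values $(\p_y^b L_s\omega)(0),(\p_y^b L_s\omega)(1)$ appearing in \eqref{eq:5} are multiplied against factors $\p_y^a f$ that vanish. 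In other words, the vanishing of $f$ on the boundary is precisely what neutralizes the problematic correction terms from \eqref{eq:5}, which is the same mechanism that will make the stability proof of Theorem~\ref{thm:finite} go through; I would make this cancellation explicit as the technical heart of the argument.
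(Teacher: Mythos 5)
Your proposal takes essentially the same route as the paper: both write $\omega(t)-\omega_0=-\int_0^t f\,\p_x L_s\omega\,ds$ and let the compact support, respectively the order-$N$ vanishing, of $f$ annihilate the integrand (via the Leibniz rule for the second statement). The paper's proof is even shorter than yours — it treats the finiteness of the boundary traces of $\p_y^b \p_x L_s\omega$ at fixed time as immediate (one-dimensional elliptic regularity per Fourier mode suffices) and never needs the correction terms of \eqref{eq:5}, so the ``technical heart'' you anticipate in your final paragraph is unnecessary for this qualitative statement.
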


\begin{proof}[Proof of Lemma \ref{lem:support}]
  Suppose that $f$ is supported in the interval $I$. Then
  \begin{align*}
    \dt \omega = -f \p_x L_t \omega
  \end{align*}
  vanishes if $f$ vanishes and hence
  \begin{align*}
    \omega(t)-\omega_0= \int_0^t \dt \omega
  \end{align*}
  is supported in $\T_L\times I$.

  Concerning the finite order of vanishing we note that $\dt \omega= -f \p_x L_t
  \omega$ vanishes of order at least $N$, since $L_t \omega$ vanishes on the
  boundary (though it might vanish arbitrarily slowly) and $f$ by assumption
  vanishes to order at least $N$. Integrating over the compact time interval
  $[0,t]$ it thus follows that $\omega(t)-\omega_0$ vanishes of order at least
  $N$.
\end{proof}

This allows us to establish improved estimates on $\p_y^{j}L_t \omega|_{y=0,1}$.
\begin{prop}
  \label{prop:Dirichlet}
  Suppose that for some $N \in \N$, $f$ and $\omega_0$ satisfy
  \eqref{eq:vanishing}. For any $j \leq N$ it holds that
  \begin{align}
    \label{eq:4}
    |\p_y^j L_t \omega|_{y=0,1}| \leq (1+\|g\|_{j})\langle t \rangle^{j-1} \p_y L_t \omega|_{y=0,1}.
  \end{align}

  Furthermore, for any $\delta>0$ we may estimate
  \begin{align}
    \label{eq:6}
    \p_y L_t \omega|_{y=0,1} \leq C_{\delta} \langle t \rangle^{-j} \sum_{j_1+j_2=j} \|g\|_{j_1}  \sqrt{\sum_{\eta} \frac{1}{\langle \eta-kt \rangle^{1-\delta}}|\tilde{\p_y^{j_2}\omega}(\eta)|^2}.
  \end{align}
\end{prop}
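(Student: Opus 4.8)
The plan is to work, as throughout, at a fixed frequency $k$ in $x$ and to study the stream function $\psi := L_t\omega$, which solves the second order Dirichlet problem $(-k^2 + (g(\p_y - ikt))^2)\psi = \omega$, $\psi|_{y=0,1}=0$. Expanding the operator (using $g(\p_y-ikt)g(\p_y-ikt)$) this reads
\begin{align*}
  g^2\p_y^2\psi + b\,\p_y\psi + c\,\psi = \omega, \qquad b = gg' - 2ikt\,g^2, \quad c = -k^2 - ikt\,gg' - k^2t^2 g^2,
\end{align*}
a linear ODE with leading coefficient $g^2\geq 1$. By \eqref{eq:vanishing} and Lemma \ref{lem:support}, both $\omega(t)$ and $f$ vanish to order $N$ on the boundary, so $\p_y^m\omega|_{y=0,1}=0$ for all $m\leq N$.

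For \eqref{eq:4} I would solve for the top derivative, $\p_y^2\psi = g^{-2}(\omega - b\,\p_y\psi - c\,\psi)$, differentiate $j-2$ times and evaluate at the boundary. Since $\psi$ and all $\p_y^m\omega$ ($m\leq N$) vanish there, the Leibniz rule expresses $\p_y^j\psi|_{y=0,1}$ as a linear combination of the lower boundary derivatives $\p_y^b\psi|_{y=0,1}$, $1\leq b\leq j-1$, with coefficients that are polynomials in $t$ whose coefficients are built from $g,1/g$ and their derivatives. I would then run the induction $\p_y^b\psi|_{y=0,1} = P_{b-1}(t)\,\p_y\psi|_{y=0,1}$ with $\deg_t P_{b-1}=b-1$: the only terms raising the $t$-degree are the \emph{undifferentiated} coefficients $g^{-2}b\sim t$ and $g^{-2}c\sim t^2$, because every $\p_y$ annihilates the purely $t$-dependent, $y$-independent parts $-2ikt$ and $-k^2t^2$; matching these against the inductive degrees yields exactly $\deg_t P_{j-1}=j-1$. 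Collecting the $g$-factors, whose total order of differentiation is at most $j$, into the multiplicative norm $\|g\|_j$ (the additive $1$ accounting for the term carrying no $g$-derivative) gives \eqref{eq:4}.

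For \eqref{eq:6} the point is to extract the oscillation. I would substitute $\psi = e^{ikty}\chi$, under which $g(\p_y-ikt)\psi = e^{ikty}g\p_y\chi$, turning the problem into the $t$-\emph{independent} Sturm--Liouville problem $g\p_y(g\p_y\chi) - k^2\chi = e^{-ikty}\omega$ with $\chi|_{y=0,1}=0$, and note that $|\p_y\psi|_{y=0,1}| = |\p_y\chi|_{y=0,1}|$. Representing $\chi$ through the ($t$-independent) Dirichlet Green's function $G(y,y')$ of $g\p_y(g\p_y\,\cdot\,)-k^2$, the boundary derivative becomes the oscillatory integral $\p_y\chi(0) = \int_0^1 \p_y G(0,y')\,e^{-ikty'}\omega(y')\,dy'$, with a smooth $t$-independent amplitude $\p_y G(0,\cdot)$ expressed through the homogeneous solution $\phi_1$ (and analogously at $y=1$). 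Because $\omega$ vanishes to order $N\geq j$ at the endpoints, integrating by parts $j$ times produces the gain $(kt)^{-j}\sim\langle t\rangle^{-j}$ with no boundary contributions; distributing the $j$ derivatives by Leibniz gives a sum over $j_1+j_2=j$ in which $\p_y^{j_1}(\text{amplitude})$ is controlled via the homogeneous ODE (exactly as in the previous step) by $\|g\|_{j_1}$, leaving a factor $\p_y^{j_2}\omega$. It remains to bound each integral $\int_0^1 A\,e^{-ikty'}\p_y^{j_2}\omega\,dy'$ with $A$ smooth: expanding $\p_y^{j_2}\omega$ in the Fourier-type basis indexed by $\eta$ and applying Cauchy--Schwarz with the splitting across $\langle\eta-kt\rangle^{1-\delta}$ gives
\begin{align*}
  \Bigl|\int_0^1 A\,e^{-ikty'}\p_y^{j_2}\omega\,dy'\Bigr| \leq \Bigl(\sum_\eta \langle\eta-kt\rangle^{1-\delta}|\hat A(\eta-kt)|^2\Bigr)^{1/2}\Bigl(\sum_\eta \frac{|\widetilde{\p_y^{j_2}\omega}(\eta)|^2}{\langle\eta-kt\rangle^{1-\delta}}\Bigr)^{1/2},
\end{align*}
and the first factor is a finite constant $C_\delta$: the periodized amplitude need not vanish at the endpoints, so $\hat A(\zeta)$ decays only like $\langle\zeta\rangle^{-1}$ and the dual sum $\sum\langle\zeta\rangle^{-1-\delta}$ converges \emph{precisely because} $\delta>0$. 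This is where the arbitrarily small loss in \eqref{eq:6} enters; assembling the pieces gives the claim.

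The main obstacle I anticipate is the bookkeeping in the first part: showing that the naive $t^{2(j-1)}$ growth suggested by the $t^2$ coefficient $c$ collapses to $\langle t\rangle^{j-1}$. This hinges on combining the boundary vanishing of $\psi$ with the fact that the top $t$-powers in $b$ and $c$ are $y$-independent and hence killed by each differentiation, so that $c$ contributes at top order only through its undifferentiated value. In the second part the analogous but milder difficulties are justifying the $j$-fold integration by parts (clean thanks to \eqref{eq:vanishing} and Lemma \ref{lem:support}) and correctly identifying the limited endpoint regularity of the amplitude, which is what forces the small loss $\delta>0$ rather than a full $\langle\eta-kt\rangle^{-1}$ weight.
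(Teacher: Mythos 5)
Your proposal is correct and follows essentially the same route as the paper: for \eqref{eq:4} the paper reduces via the recursion $(g(\p_y-ikt))^{j}L_t\omega = k^2(g(\p_y-ikt))^{j-2}L_t\omega$ at the boundary (the source term $(g(\p_y-ikt))^{j-2}\omega$ dropping out by \eqref{eq:vanishing}), which is the same ODE-plus-induction bookkeeping as your Leibniz degree count on the solved-for equation. For \eqref{eq:6} the paper writes $g^2\,\p_y L_t\omega|_{y=0,1}=\int u\,\omega$ with adjoint homogeneous solutions satisfying $u(t,y)=e^{ikty}u(0,y)$ --- exactly your $e^{ikty}$-gauged, $t$-independent Green's-function amplitude --- and then performs the same $j$-fold integration by parts, Leibniz expansion, amplitude decay $\langle\eta-kt\rangle^{-1}$, and Cauchy--Schwarz splitting across $\langle\eta-kt\rangle^{1\pm\delta}$ with the identical constant $C_\delta$.
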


\begin{proof}[Proof of Proposition \ref{prop:Dirichlet}]
  In the case $j=1$ the estimate \eqref{eq:4} is an equality and we recall that
  $L_t\omega|_{y=0,1}=0$ by definition of $L_t$. For $j\geq 2$ we may reduce to
  the above estimates by noting that
  \begin{align*}
    (g(\p_y-ikt))^{j} L_t \omega = k^2 (g(\p_y-ikt))^{j-2}L_t \omega + (g(\p_y-ikt))^{j-2}\omega. 
  \end{align*}
  If $j-2\leq N$ the last term vanishes and we hence obtain a recursion formula
  \begin{align*}
    (g(\p_y-ikt))^{j} L_t \omega = k^2 (g(\p_y-ikt))^{j-2}L_t \omega,
  \end{align*}
  which we can solve for $\p_y^{j}L_t \omega$ since $g$ is bounded above and
  below. The estimate \eqref{eq:4} thus
  immediately follows by induction.\\

  It thus remains to study $\p_yL_t \omega|_{y=0,1}$. As shown in \cite[Lemma
  3]{Zill3} we may compute this Neumann data as integrals against $\omega$. That
  is, if $u_{0}, u_1$ are solutions of the adjoint problem
  \begin{align}
    \label{eq:7}
    (-k^2+((\p_y-ikt)g)^2)u =0
  \end{align}
  with boundary conditions
  \begin{align*}
    \begin{pmatrix}
      u_0(0) & u_1(0)\\
      u_0(1) & u_1(1)
    \end{pmatrix}
               =
               \begin{pmatrix}
                 -1 & 0 \\ 0 &1
               \end{pmatrix},
  \end{align*}
  then it follows that
  \begin{align}
    \label{eq:8}
    \begin{split}
      \int u \omega = \int (-k^2+(g(\p_y-ikt))^2)L_t \omega &= u g^2 (\p_y-ikt)L_t \omega |_{y=0,1}\\
      &=
      \begin{cases}
        g^2(0)\p_yL_t\omega (0) & \text{ if } u=u_0, \\
        g^2(1)\p_yL_t\omega (1) & \text{ if } u=u_1,
      \end{cases}
    \end{split}
  \end{align}
  where we again used that $L_t\omega|_{y=0,1}$. We note that by the structure
  of \eqref{eq:7} it holds that $u_0(t,y)=e^{ikty}u_0(0,y)$ and
  $u_1(t,y)=e^{ikt(y-1)}u_1(0,y)$ and that
  \begin{align*}
    u_0(0,y)&= \frac{1}{g}\frac{\sinh(k(U^{-1}(y)-U^{-1}(1)))}{\sinh(k(U^{-1}(0)-U^{-1}(1)))}, \\
    u_1(0,y)&= \frac{1}{g} \frac{\sinh(k(U^{-1}(y)-U^{-1}(0)))}{\sinh(k(U^{-1}(0)-U^{-1}(1)))},
  \end{align*}
  can be explicitly computed and are smooth functions.

  As $\omega$ vanishes on the boundary we may integrate by parts $j$ times in
  \eqref{eq:8} and thus obtain that
  \begin{align*}
    \p_y L_{t} \omega(0)= \frac{k}{g^2(0)} \frac{1}{(-ikt)^j}\int e^{ikty} \p_y^{j}(u_0(0,y) \omega).
  \end{align*}
  Expanding $\p_y^{j}(u_1(y)\omega)$ by the product rule and $\p_y^{j_2}\omega$
  in terms of its Fourier series, the estimate then follows by noting that
  \begin{align*}
    \frac{1}{k^{j_1}} \left| \int e^{i(kt-\eta)y} \p_y^{j_1} u_1 \right| \leq \frac{1}{\langle \eta-kt \rangle}
  \end{align*}
  and that by the Cauchy-Schwarz inequality for any sequence $R \in l^2$ it
  holds that
  \begin{align*}
    \sum_\eta \frac{1}{\langle \eta-kt \rangle} |R_\eta| \leq \sqrt{\sum_\eta \frac{1}{\langle \eta-kt \rangle^{1+\delta}}}  \sqrt{\sum_\eta \frac{1}{\langle \eta-kt \rangle^{1-\delta}} |R_\eta|^2}.
  \end{align*}
  The result hence follows with $C_\delta=\sqrt{\sum_\eta \frac{1}{\langle
      \eta-kt \rangle^{1+\delta}}} <\infty$.
\end{proof}

With these preparations we are now ready to prove Theorem \ref{thm:finite}.

\begin{proof}[Proof of Theorem \ref{thm:finite}]
  As in Section \ref{sec:infinite} we iteratively construct a family of Lyapunov
  functionals. Our basic building block is given by the weight $A(t)$ from
  \cite[Lemma 5.3]{Zill3}
  \begin{align*}
    A(t): e^{i \eta y} \exp(\arctan(\frac{\eta}{k}-t)- \int_0^{t}\frac{1}{\langle \tau \rangle^{2\beta}}\frac{1}{(1+(\eta/k-\tau)^{2})^{2\gamma}} d\tau) e^{i \eta y},
  \end{align*}
  which clearly satisfies
  \begin{align}
    \label{eq:10}
    \langle u, \dot A(t) u \rangle \leq -C \sum_{\eta}|u(\eta)|^2 \left(\frac{1}{1+(\eta/k-t)^2} + \frac{1}{\langle t \rangle^{2\beta}}\frac{1}{(1+(\eta/k-t)^{2})^{2\gamma}}  \right).
  \end{align}
  Here the second term will be used to control contributions due to $\p_y^{j}L_t
  \omega|_{y=0,1}$.

  Following the same strategy as in the proof of Theorem
  \ref{thm:quantitative_stability} we make the ansatz
  \begin{align}
    \label{eq:9}
    \begin{split}
      E_0(t)&= \langle \omega, A \omega \rangle, \\
      E_{j+1}(t)&:= 2 \langle \p_y^{j+1}\omega, A \p_y^{j+1}\omega \rangle + 4C
      \sum_{j_1+j_2=j} \|(f,g)\|_{j_1}^2 E_{j_2}(t).
    \end{split}
  \end{align}
  and claim that for all $j$
  \begin{align}
    \label{eq:13}
    \begin{split}
      \frac{d}{dt} E_j(t) &\leq -C \sum_{\eta} (\frac{1}{1+(\eta/k-t)^2} + \frac{1}{\langle t \rangle^{2\beta} (1+(\eta/k-t)^2)^{\gamma}}) |\p_y^{j}\omega(\eta)|^2 \\
      &\quad - C \sum_{j_1+j_2=j-1} \|\p_y(f,g)\|_{j_1} \\
      & \quad \times\sum_{\eta} (\frac{1}{1+(\eta/k-t)^2} + \frac{1}{\langle t
        \rangle^{2\beta} (1+(\eta/k-t)^2)^{\gamma}}) |\p_y^{j_2}\omega(\eta)|^2
      \leq 0.
    \end{split}
  \end{align}
    
  By construction it then again holds that
  \begin{align*}
    C\|\omega(t)\|_{\dot{H}^{j}}^2 \leq E_j(t)\leq E_j(0)\leq 2 \|\omega_0\|_{H^{j}}^2 + C^{j}\sum_{j_1+j_2<j} \|(f,g)\|_{W^{j_1,\infty}} \|\omega_0\|_{H^{j_2}}^2, 
  \end{align*}
  which implies the result.

  \underline{The case $j=0$:} In the following we recall the construction of
  $E_0(t)$ and $E_1(t)$ from \cite{Zill3} and subsequently extend our proof to
  the case of general $j$. We claim that there exists $c>0$ such
  \begin{align}
    \label{eq:11}
    \langle A(t)\omega, ikf L_t \omega \rangle \leq ck\|f\|  \sum_{\eta}|\tilde{\omega}(\eta)|^2 \frac{1}{1+(\eta/k-t)^2}. 
  \end{align}
  Then if $\|f\| k c< C/2$ with $C$ as in \eqref{eq:10} it immediately follows
  that $E_0(t)$ is non-increasing and furthermore
  \begin{align*}
    \frac{d}{dt}E_0(t)\leq - C/2 \sum_{\eta}|\tilde{\omega}(\eta)|^2 \frac{1}{1+(\eta/k-t)^2}.
  \end{align*}
  Indeed, suppose that $g>c>0$ and for any $u \in L^2$ and any $t>0$ define
  $\Lambda_t[u]$ to be the unique solution of
  \begin{align*}
    (-k^2+c^2(\p_y-ikt)^2) \Lambda_t[u]&=u, \\
    \Lambda_t[u]|_{y=0,1}&=0.
  \end{align*}
  That is, we replaced $g$ by a constant. We then define
  \begin{align*}
    \|u\|_{H^1_t}^2:= k^2 \|u\|_{L^2}^2 + c^2 \|(\p_y-ikt)u\|_{L^2}^2
  \end{align*}
  and
  \begin{align*}
    \|u\|_{H^{-1}_t}^2 := - \langle \Lambda_t[u], u \rangle = \|\Lambda_t[u]\|_{H^{1}_t}^2.
  \end{align*}
  These are by construction dual norms, so we may estimate
  \begin{align*}
    \langle A(t)\omega, ikf L_t \omega \rangle \leq \|A(t)\omega\|_{H^{-1}_t} k \|f\|_{C^1} \|L_t \omega\|_{H^{1}_t}.
  \end{align*}
  Since $L_t \omega$ is defined in terms of an elliptic operator we may further
  estimate
  \begin{align*}
    \|L_t \omega\|_{H^{1}_t}^2 \leq -\langle \omega, L_t \omega \rangle \leq \|\omega\|_{H^{-1}_t} \|L_t \omega\|_{H^{1}_t}^2.
  \end{align*}
  Our estimate thus follows if we can show that $\|u\|_{H^{-1}_t}^2$ is
  controlled by a Fourier multiplier as in \eqref{eq:11}. In the whole-space
  setting of Section \ref{sec:infinite} such a result is trivial since
  $(-k^2+c^2(\p_y-ikt)^2)$ is given by a Fourier multiplier and thus $H^{-1}_t$
  is as well. In the present setting with boundary this poses some technical challenges and one can show by explicit computation of
  $\Lambda_t[e^{i\eta y}]$ (see \cite{Zill3} Lemma 5.2) that indeed
  \begin{align*}
    \|u\|_{H^{-1}_t}^2 \leq c \sum_{\eta}|u(\eta)|^2 \frac{1}{1+(\eta/k-t)^2}.
  \end{align*}
  This concludes the proof for the case $j=0$. As shown in \cite{Zill5} a
  similar result is also valid in fractional Sobolev spaces.
  \\

  \underline{The induction step:} Having established the base case of
  \eqref{eq:13} we now consider the induction step.
    
  We may write the $\p_y^{j+1}$ derivative of \eqref{eq:channel} as
  \begin{align}
    \label{eq:12}
    \dt \p_y^{j} \omega + \sum_{j_1+j_2=j}{j \choose j_1} (\p_y^{j_1}f) \p_y^{j_2}L_t \omega =0.
  \end{align}
  Furthermore, we may split
  \begin{align*}
    \p_y^{j_2}L_t \omega &= (\p_y^{j_2}L_t \omega- \p_y^{j_2}L_t \omega(0)u_0 - \p_y^{j_2}L_t \omega(1)u_1) + \p_y^{j_2}L_t \omega(0)u_0 + \p_y^{j_2}L_t \omega(1)u_1\\
                         &=:\psi_j + \p_y^{j_2}L_t \omega(0)u_0 + \p_y^{j_2}L_t \omega(1)u_1.
  \end{align*} 
  As the $H^{1}_t$ norms of $u_0(t,y)=e^{ikty}u_0(0,y)$ and
  $u_1(t,y)=e^{ikt(y-1)}u_1(0,y)$ are independent of $t$ it follows that
  \begin{align}
    \label{eq:14}
    \|\p_y^{j}L_t \omega\|_{H^{1}_t} \leq  \|\psi_j\|_{H^{1}_t} + C|\p_y^{j}L_t \omega| (0) + C|\p_y^{j}L_t \omega| (1).
  \end{align}
  As $\psi_j$ satisfies zero Dirichlet boundary conditions, we may test
  \begin{align*}
    (-k^2+(g(\p_y-ikt))^2) \psi_j =\p_y^{j}\omega(t) + [(-k^2+(g(\p_y-ikt))^2), \p_y^j]L_t \omega
  \end{align*}
  with $-\psi_j$ and integrate by parts to obtain that
  \begin{align}
    \label{eq:15}
    \|\psi_j\|_{H^{1}_t} \leq \|\p_y^{j}\omega(t)\|_{H^{-1}_t} + \sum_{j_1+j_2=j, j_2<j} \|g\|_{j_1} \|\p_y^{j_2}L_t\omega\|_{H^{1}_t}. 
  \end{align}
  Combining \eqref{eq:14} and \eqref{eq:15} it follows that
  \begin{align}
    \|\psi_j\|_{H^{-1}_t} \leq \|\p_y^{j}\omega\|_{H^{-1}_t} + \sum_{j_1+j_2=j, j_2<j} \|g\|_{j_1} (\|\p_y^{j_2}\omega\|_{H^{-1}_t} +|\p_y^{j_2}L_t \omega|_{y=0,1}|).
  \end{align}
  We may now further invoke Proposition \ref{prop:Dirichlet} to estimate
  \begin{align}
    |\p_y^{j_2}L_t \omega|_{y=0,1}| \leq C_{\delta} \langle t \rangle^{-1} \sum_{j_3+j_4=j_2} \|g\|_{j_3} \sqrt{\sum_{\eta} \frac{1}{\langle \eta-kt \rangle^{1-\delta}} |\p_y^{j_4}\omega(\eta)|^2}.
  \end{align}
  As a final tool, we note that for $u \in \{u_0,u_1\}$ due to the oscillatory
  structure it holds that
  \begin{align}
    |\langle A \p_y^{j+1} \omega, u \rangle | \leq C_\delta \sqrt{\sum_{\eta}\frac{1}{\langle \eta/k-t \rangle^{1-\delta}} |\p_y^{j+1}\omega(\eta)|^2}
  \end{align}

  With all these estimates at hand, we may integrate \eqref{eq:12} against $A
  \p_y^{j+1}\omega$ and control
  \begin{align}
    |\langle \dt \p_y^{j+1}\omega, A \p_y^{j+1}\omega \rangle|  &\leq  \|f\|_{W^{1,\infty}} \|\p_y^{j+1}\omega\|_{H^{-1}_t}^2 \\
                                                                & \quad + \|f\|_{W^{1,\infty}} C_{\delta} \langle t \rangle^{-1} \sum_{\eta}\frac{1}{\langle \eta/k-t \rangle^{1-\delta}} |\p_y^{j+1}\omega(\eta)|^2 \\
                                                                & \quad + \sum_{j_1+j_2=j+1, j_2<j+1}\|g\|_{j_1} (\|\p_y^{j_2}\omega\|_{H^{-1}_t}+  |\p_y^{j_2}\omega(\eta)|^2).
  \end{align}
  The first two terms are exactly such that \eqref{eq:10} shows that they can be
  absorbed into
  \begin{align*}
    \langle \p_y^{j+1}\omega, \dot A \p_y^{j+1}\omega \rangle,
  \end{align*}
  provided $f$ satisfies the smallness assumption. The remaining terms are all
  of lower order and using \eqref{eq:13} can be absorbed into
  \begin{align*}
    \frac{d}{dt} 4C \sum_{j_1+j_2=j} \|(f,g)\|_{j_1}^2 E_{j_2}(t)
  \end{align*}
  by the induction assumption. Thus, indeed $E_{j+1}(t)$ satisfies
  \eqref{eq:13}, which concludes the proof.
\end{proof}

\section{Discussion}
\label{sec:disc}
In this article we show that stability in Gevrey regularity corresponds to a
quantitative control of stability in Sobolev spaces.
Furthermore, this quantitative essentially reduces to establishing good
estimates in $L^2$ and $H^1$ as in \cite{Zill5}, \cite{Zill3}, which then extend
to arbitrary Sobolev regularity.
We thus provide a new perspective on and a very short alternative proof of the
results of Jia \cite{jia2019linear}.

Furthermore, we consider the settings of both infinite and finite channels and
the milder constraint of a high finite order of vanishing instead of requiring
compact support.
In particular, vanishing of infinite order is shown to be sufficient to
establish stability in Gevrey regularity.
On the other hand, our perturbative construction of the energy functional $E_0$
imposes a smallness condition instead of a sharper non-resonance condition.

A natural question in view of the existing instability results in $H^{3/2+}$
(\cite{Zill5}) for perturbations not vanishing on the boundary and the stability
results of Theorem \ref{thm:finite2} is to which extent the condition
\eqref{eq:vanishing} is necessary for (asymptotic) stability to hold, both in
the linear and nonlinear setting.
Here, the analysis in \cite{Zill5} suggests to consider boundary corrections and
search for $j$ such that
\begin{align*}
  \dt \p_y^{j} \omega|_{y=0,1}=  -\p_y^{j}(f\p_xL_t \omega)
\end{align*}
is not integrable in time. This would then imply an instability in
$W^{j,\infty}$ and by the Sobolev embedding also an instability in higher Sobolev
regularity.
In this view a sharper formulation of \eqref{eq:vanishing} hence might be to
impose an order of vanishing $N$ on the product $f \omega_0$ instead. However,
condition \eqref{eq:vanishing} allows for a simple formulation of Proposition \ref{prop:Dirichlet}.

\subsection*{Acknowledgments}
Christian Zillinger's research is supported by the ERCEA under the grant 014
669689-HADE and also by the Basque Government through the BERC 2014-2017
program and by Spanish Ministry of Economy and Competitiveness MINECO: BCAM Severo Ochoa excellence accreditation SEV-2013-0323.
\bibliographystyle{alpha} \bibliography{citations2}
\end{document}